\def\({\Big(}
\def\){\Big)}
\renewcommand{\t}{\tilde}
\newcommand{\Z}{  \mathbb Z }
\newcommand{\N}{  \mathbb N }
\newcommand{\Q}{  \mathbb Q }
\newcommand{\be}  { \mbox{$\boldsymbol{\epsilon}$}}
\newcommand{\bo}  {\mbox{$\boldsymbol{\omega}$}}
\newcommand{\bx}  {{\bf x}}
\newcommand{\by}  {{\bf y}}
\newcommand{\ov}{\overline}
\renewcommand{\t}{\tilde}
\newtheorem{Thm}{Theorem}[section]
\newtheorem{Lemma}[Thm]{Lemma}
\newtheorem{Prop}[Thm]{Proposition}
\newcommand{\dsize}{\displaystyle}
\newcommand{\cal}{\mathcal}
\numberwithin{equation}{section}
\newcommand{\ld}[1]{{\color{red}{#1}}}
\author{L. De Carli}
\address{Laura De Carli, Dept. of Mathematics, Florida International University,   Miami, FL 33199, USA.}
\email{decarlil@fiu.edu}
\author{A. Echezabal}
\address{Andrew Echezabal, Dept. of Mathematics, Florida International University,   Miami, FL 33199, USA.}
\email{ ancheza@fiu.edu}
\author{I. Morell}
\address{Ismael Morell, Dept. of Mathematics, Florida International University,   Miami, FL 33199, USA.}
\email{imore040@fiu.edu  }
\begin{document}
	
	\subjclass[2010] 
	{11A67 
		28A80: 
}

\title [Egyptian fractions  meet the Sierpinski  triangle] {{\Large  Egyptian fractions meet the Sierpinski  triangle   }}

\maketitle






\begin{abstract}
	 We explore a novel link between two seemingly disparate mathematical concepts: Egyptian fractions and fractals. By examining the decomposition of rationals into sums of distinct unit fractions, a practice rooted in ancient Egyptian mathematics, and   the  arithmetic operations  that can be performed using this decomposition, we uncover fractal structures that emerge from   these representations.  
	\end{abstract}

\section{ Egyptian fractions}
  
The ancient Egyptians wrote    fractions  as   a formal
 sum of distinct   unit fractions  $\frac{1}{n}$, with $n\ge 2$.   For example,   they wrote  $\frac 25 $ as $\frac 13+\frac {1}{15}$ or as $\frac 14+\frac{1}{12}+\frac 1{15}$ but  not  as $  \frac 15+\frac  15$.
 Unit fractions  were represented with an almond-shaped  symbol (the "mouth")  over the hieroglyphic symbol for  the number  which indicated the denominator. \footnote{ There were some exceptions to this rule: for example,   the Egyptian had special symbols for the fractions $\frac 12$ and $\frac 23$.} 
 
%
 This peculiar representation of fraction lead to  define 
 {\it Egyptian fractions} as   formal sums of  distinct unit fractions.  
 Egyptian fractions have captivated mathematicians for centuries due to their unique properties and historical significance and are still an active topic of  research in number theory. See e.g. \cite {V},  the recent \cite{E} and the references cited there.

An efficient algorithm 
 first  introduced  by   Fibonacci in 1202 in  the {\it Liber Abaci}    can be used to express any given proper fraction $\frac pq$ as  an  Egyptian fraction with at most $p$ terms.  
 For  instance,   the Fibonacci algorithm  gives
\begin{equation}\label{1-1}
  \frac 2{2k+1}= \frac 1{ k+1}+\frac 1{(k+1)(2k+1)} 
\end{equation}
for every   integer  $k>0$. For a detailed description and applications of the Fibonacci greedy algorithm see e.g.  \cite{Cr},   \cite{L}.

Natural numbers can be represented  as a Egyptian fractions too: for instance,
 $1=\frac 12+\frac 13+ \frac{1}{12}+\frac{1}{18}+\frac 1{36}.$ 
Repeated applications of   this identity  and the formula  \eqref{1-1}  allow to represent any positive integer  as  a sum of unit fractions. However, these representations can be extremely lengthy  and computationally complex.
 \begin{figure}
 	\begin{tikzpicture}[scale=0.5]
 		%
 		\fill[black] (0,0) ellipse (.8 and 0.4); 
 		\foreach \x in {-0.5, 0, 0.5} {
 			\draw[black, line width=3pt] (\x,-0.5) -- (\x,-1.5); 
 		}
 		\node[below, black] at (0,-1.6) {$\frac{1}{3}$};
 		
 		\fill[black] (3,0) ellipse (.8 and 0.4); 
 		\foreach \x in {-0.6, -0.2, 0.2, 0.6} {
 			\draw[black, line width=3pt] (3+\x,-0.5) -- (3+\x,-1.5); 
 		}
 		\node[below, black] at (3,-1.6) {$\frac{1}{4}$};
 		
 		\fill[black] (6,0) ellipse (.8 and 0.4); 
 		\draw[black, line width=3pt] (6.5,-1.1) arc[start angle=0, end angle=180, radius=0.5]; 
 		\node[below, black] at (6,-1.6) {$\frac{1}{10}$};
 	\end{tikzpicture}\caption{ Hieroglyphic  for Egyptian fractions}
 \end{figure}
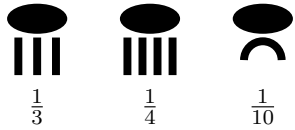

 The sum of two Egyptian fractions can be expressed as a formal sum of terms of the form $\frac an$, where $a=1,\,2$.  When $n$ is even, terms of the form $\frac 2n$  can be   simplified  and when $n$ is odd they   can be   rewritten as a sum of unit fractions using  the formula \eqref{1-1}.   
The ancient Egyptian scribes used tables to write   $\frac 2n$   as sums of unit fractions; one of these tables  is part of the Rhind Mathematical Papyrus (circa 1650 BC),  which is now housed in the British Museum.

Also the difference of Egyptian fractions is straightforward, as long as  terms in the form of  $\frac{a}{n}$, with $a=   \pm 1,  $ can be considered in the result.  For example,  the difference of 
$\frac 15+ \frac 1{10}+\frac 1{20}$ and 
$ \frac 1{10}+\frac 1{30}$  (which are  Egyptian fractions that represent $\frac 7{20}$ and $\frac2{15}$) can be written as $ \frac 15+\frac 1{20}-\frac 1{30}$. 

 \medskip
 In this paper we identify    standard  Egyptian fractions    with  unique sets of the form   $X  =\{\frac{x_j}{j+1}\}_{j\in\N}$,  where  $\N=\{1, \, 2,\, ...\}$ is the set of positive integers,   $x_j\in \{  0,1\}$   and  only finitely many $x_j$ are nonzero.
Different  sets correspond to different Egyptian fractions even when   the terms of the sets add up to  the same number.
 
 We also  define the   {\it signed Egyptian fractions} as formal  sums and differences of  unit fractions,
 and we identify   them with  unique  sets of the form  $X  =\{\frac{x_j}{j+1}\} _{j\in\N} $,  where $x_j\in \{-1,  0,1\}$   and  only finitely many $x_j$ are nonzero.
When the context makes it clear and no distinction is required, we will use the term "Egyptian fractions" to refer to both standard and signed Egyptian fractions.

 We say that two Egyptian fractions  $X  =\{\frac{x_j}{j+1}\} _{j\in\N} $ and $Y  =\{\frac{y_j}{j+1}\} _{j\in\N} $   are {\it equivalent} if their terms  add up to the same number and   they are
  {\it disjoint} if    $x_j y_j= 0$ for every $j$.   For instance, the Egyptian fraction $\frac 13+\frac 14$ and $\frac 12-\frac 15$ are disjoint, but $\frac 13+\frac 14$ and $-\frac 13+\frac 15$ are not.
   
Performing arithmetic operations on disjoint Egyptian fractions is straightforward. Specifically, the sum of two disjoint Egyptian fractions   identified by the sets   $X$ and $Y$   by  $X\cup Y$  and their difference   by  $X\cup (-Y)$, where   $-Y  =\{\frac{-y_j}{j+1}\} _{j\in\N} $.

%
It can be proved that two Egyptian fractions have  equivalent disjoint Egyptian fraction representations, but we will  not provide a proof here.

\section{Egyptian fractions and    decimals in base two or three}
 
 Our  system of numbers is in base ten;  each position in the number represents a   power of ten, and the value of the number is determined by summing these weighted positions.  
  
More generally, any base $m \ge 2$  and any set of $m$ symbol, each corresponding to  one of the integers $0,\, ...,\, m-1$, can be used to represent numbers. 
 In base $m$, the  symbol $[a_k... a_{1}  a_0.\, a_{-1} a_{-2}...]_m $   denotes the number 
 $$a= a_km^k+...+ a_1 m+a_0+\frac{a_{-1}}{m}+ \frac{a_{-2}}{m^2}+...
 .$$
 Decimal representations  of a number in any base  may  not be unique; for instance,    $1=0.9999...$  in base ten.  In base two,   the fraction  $\frac 12$  is  represented by the decimal  $[0.100...]_2$ and also by    $[0.01111...]_2$.
 
 We say that a number   $x\in [0,1)$  has   a {\it finite decimal representation}  in a given  base if such representation  contains only finitely many nonzero digits.     
 
 It is not difficult to verify that   a   number cannot have   two different finite decimal representations in a given basis, but  the examples above show that it  can have    a finite  and  an infinite  decimal representations. 
   
  When multiplying or dividing   a decimal number by a power of  the base, the position of the decimal point shifts either to the right (when multiplying) or to the left (when dividing). For instance,    $m^2* [0.\,x_1x_2x_3 ...]_m ...= [x_1x_2.\,x_3 ... ... ]_m$ and $\frac 1{m^2}* [0.x_1x_2x_3...]_m ...= [0.00x_1x_2 x_3  ... ]_m$.
   
   \medskip

    The basis $m=2$ is especially relevant in the  applications; we can write  every non-negative real number   as $x=x_k2^k+...+ 2x_1 +x_0+\frac{x_{-1}}{2}+ \frac{x_{-2}}{2^2}+...
  $  where each  $x_j$  is   either $0$ or $1$, and we can represent $x$   by the binary number   $  [x_k.... x_1x_0.\, x_{-1}x_{-2}...]_2 $.

 \medskip
Numbers can also be represented using  the so-called {\it balanced ternary} basis. This is   a numerical system  in base $3$  but with   digits $\{0,\, 1,\, -1\}$.   
 
 The balanced ternary representation  offers computational  advantages over the standard  base 3 representation, such as simpler rules for performing  arithmetic operations. 
 It is   used in   some analog circuit design   where signals have three states (positive, zero, and negative) and in   modern machine learning algorithms. See e.g. \cite{FT}, \cite{TT2} and the references cited  there.
 	
 Each real number  $\omega$ can be written as 
 $[\omega_k ...\omega _1  \omega _0.\,\ \omega _{-1} \omega _{-2} ... ]_3=  \omega_k3^k+...+ 3\omega _1 +\omega _0+\frac{\omega _{-1}}{3}+ \frac{\omega _{-2}}{3^2}+...
 $  where  the coefficients   $\omega _j$  are   either $0$,  $1$ or $-1$.  
 
 When $\omega$   is a non-negative integer, we can find its  balanced ternary representation as follows. Start by setting  $ n_0=\omega $  and   let   $r_0$ be the reminder of the division of $n_0 $ by $3$.  If  $r_0=0$ or $ r_0=1$, set  $\omega _0=r_0$ and if  $r_0=2$, set  $\omega _0=-1$. Then, set  $  n_{1} = (n_0-\omega_0)/3 $,   replace $n_0$ with $n_1$   and     repeat the process until   $n_k=0$.  

   For instance,  the number 8 can be represented  in balanced ternary   using the following steps:
    \begin{itemize}
    \item 	$n_0=8 = 3*2+2$, so $\omega _0= -1,\ n_1= (8-(-1))/3=3$
    \item $n_1= 3*1+0$, so $\omega _1=0,\ n_2= (3-(0))/3=1$
    \item  $n_2=3*0+1$, so $\omega _2= 1,\ n_3= 0$
     
    \end{itemize}
If we let  $-1=\ov 1$, we can write $8=[1 0\ov1 ]_3$.

\medskip


We denote with  $\Z_2=\{0,1\}$    the  set of integers modulo $2$   and  with $\Z_3=\{0, 1, -1\}$  the set  of integers modulo $3$.  Addition and multiplication in   $\Z_2$ and $\Z_3$ are   defined in the usual way, but with results taken modulo $2$, or in balanced ternary.   The sets  $\Z_2 $ and $\Z_3$  form    commutative rings; they   are also fields  because every nonzero element has a multiplicative inverse.

\medskip
We define $\ell^1(\Z_2)$ as the set of vectors 
$\be =( \epsilon_1,\, ...,\, \epsilon_n\, ...)$ with   $\epsilon_j\in\{0,1\}$  and  $\sum_{k=1}^\infty\epsilon_k <\infty$.  
Similarly, we define  $\ell^1(\Z_3)$ as the set of vectors 
$\bo =( \omega_1,\, ...,\, \omega_n\, ...)$ with   $\omega_j\in\{0,1,\, -1\}$  and  $\sum_{k=1}^\infty|\omega _k| <\infty$.
These definitions imply that only  finitely many entries  $\epsilon_j$ and $\omega_j$ are nonzero. 

It is not too difficult to verify that  the sets 
 $\ell^1(\Z_2)$ and $\ell^1(\Z_3)$  are vector spaces  on $\Z_2$ and $\Z_3$, respectively. In both cases, scalar multiplication is performed componentwise, and vector addition is carried out componentwise in base $2$ (or in balanced base $3$).
  
   For example, given  $\bo= (1, -1, 1, 1, 0,...0 ...) $, $\bo'= (0,  1, 1, 1, 0,...0 ...) $    in $\ell^1(\Z^3)$,  we have
   $\bo+\bo'=( 1, 0, -1, -1, 0, ...0...)$ and $(-1) \bo= (-1,  1, -1, -1, 0,...0 ...)$.

 \medskip
 
 In the previous section we have identified     signed Egyptian fractions   with    sets  $ X= \left\{  \frac{x_j}{j+1}\right\}_{j\in\N} $, where  $
 x_j\in \{-1, 0, 1\}$.   When all $ x_j$ are non-negative,   $X$ is a standard Egyptian fraction.
 
We let ${\cal E}$  be the set of  the standard  Egyptian fractions,   and  we define the map 
   $$\mbox{$h :\ell^1(\Z^2)\to {\cal E}$,  \quad 
   $ h ( \be) =\left\{\frac{\epsilon_j}{j+1}\right\} _{j\in\N} . $}
$$
   Similarly, we let $\t {\cal E}$ be the set of   the signed Egyptian fractions  
 and we define the   map
   $$\mbox{$  h :\ell^2(\Z^3)\to \t{\cal E}$, \quad   
   $   h ( \bo) =\left\{\frac{\omega_j}{j+1}\right\}_{j=1}^\infty $.}$$
  These  mappings   establish a natural correspondence  between  standard (or signed) Egyptian fractions and  $\ell^2(\Z^2)$ (or $\ell^2(\Z^3)$).  
 
 \medskip
 We  let $\Sigma: \t {\cal E}  \to \Q$ be the map that associates to an  Egyptian fraction (standard or signed)  the sum of its term. That is,
 $$\Sigma\big(\big\{\frac{x_j}{j+1}\big\}_{j\in\N}  \big)=  \sum_{j=1}^\infty\frac{x_j}{j+1}.$$


 The  mappings  $\Sigma  h (\be)= \Sigma(h(\be))$    map  $\ell^1(\Z^2)$  and  $\ell^1(\Z^3)$ onto $\Q$.   Two equivalent Egyptian fractions  have the same  image.
 
  It  is natural to ask   whether the map $\Sigma h$   is linear in  $\ell^1(\Z_2)$ or $\ell^1(\Z_3)$.    Specifically, we ask:   
 
  \medskip
\noindent{\bf Q1:}   For which $\bx, \, \by\in\ell^1(\Z_2)$ we have  $ \Sigma   h(\bx+\by)= \Sigma   h(\bx)+\Sigma   h(\by) $ ?
  
  \medskip
\noindent{\bf Q2:}  For which $\bx, \, \by\in\ell^1(\Z_3)$ we have  $ \Sigma   h(\bx\pm \by)= \Sigma   h(\bx)\pm\Sigma    h(\by) $?

  \medskip
To best answer  these questions we introduce  the following notation:
   given  the vectors $\bx, \, \by$ in $\ell^1(\Z_2)$ or in $\ell^1(\Z_3)$, we  let  ${\bf z} ={\bf  z(x,y)} $ be the vector  for which  $z_j=1$ when   $x_j =y_j=1$, $z_j=-1$ when   $x_j =y_j=-1$ and $z_j=0$  for all other $j$.   
   
    When $\bx, \, \by\in \ell^1(\Z_2)$, the vector   ${\bf z (\bx, \by)}$ is zero  if and only  $x_jy_j=0$   for each index $j$, which implies that the Egyptian fractions     $  h(\bx')$ and $  h(\by')$ are disjoint.  If $\bx, \, \by\in \ell^1(\Z_3)$,  the vector   ${\bf z (\bx, \by)}$ is zero  if and only  either $x_jy_j=0$   or $x_j+y_j=0$ for each index $j$.
  

  \medskip
  The following proposition answers {\bf Q1}.
  
  \begin{Prop}\label{P-Sum-2}  
 
  	Let $\bx,\, \by\in \ell^1(\Z_2)$.  Then, $   \Sigma h(\bx) +   \Sigma h( \by) =   \Sigma h(\bx +\by)$ if and only if   ${\bf z}=0$.  
  	\end{Prop}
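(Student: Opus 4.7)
The plan is to compute the discrepancy $\Sigma h(\bx) + \Sigma h(\by) - \Sigma h(\bx+\by)$ explicitly and show it equals $2\Sigma h(\mathbf{z})$, from which both directions of the equivalence follow.

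First I would carefully distinguish the two kinds of addition at play: the sum $\bx+\by$ inside $\ell^1(\Z_2)$ is performed componentwise modulo $2$, whereas the sums $\Sigma h(\bx)$, $\Sigma h(\by)$, and $\Sigma h(\bx+\by)$ are ordinary sums of rationals. Writing both quantities with a common denominator structure gives
\[
\Sigma h(\bx) + \Sigma h(\by) \;=\; \sum_{j=1}^{\infty} \frac{x_j + y_j}{j+1}, \qquad \Sigma h(\bx+\by) \;=\; \sum_{j=1}^{\infty} \frac{(x_j+y_j)\bmod 2}{j+1},
\]
where both series are finite because $\bx,\by\in\ell^1(\Z_2)$.

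Next I would inspect the integer $(x_j+y_j) - \bigl((x_j+y_j)\bmod 2\bigr)$ index by index. It equals $0$ whenever $(x_j,y_j)\neq(1,1)$ and equals $2$ precisely when $x_j=y_j=1$, i.e.\ exactly when $z_j=1$. Summing over $j$ yields the key identity
\[
\Sigma h(\bx) + \Sigma h(\by) - \Sigma h(\bx+\by) \;=\; 2\sum_{j=1}^{\infty}\frac{z_j}{j+1} \;=\; 2\,\Sigma h(\mathbf{z}).
\]

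Finally I would conclude both implications from this identity. If $\mathbf{z}=0$, the right-hand side vanishes and linearity holds. Conversely, if linearity holds then $\Sigma h(\mathbf{z})=0$; but $z_j\in\{0,1\}$ and only finitely many $z_j$ are nonzero, so this is a finite sum of nonnegative rationals which can vanish only when each term does, forcing $\mathbf{z}=0$. There is no real obstacle here — the argument is a direct computation. The only subtle point worth stating carefully is the separation between the ring operation in $\Z_2$ (which produces the carry-loss) and ordinary rational addition (which preserves it), since the whole content of the proposition is measuring precisely that carry-loss.
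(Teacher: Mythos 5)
Your proof is correct, and it reaches the same pivotal identity as the paper --- that the discrepancy between the two sides equals $2\,\Sigma h({\bf z})$ --- but by a different route. The paper splits off the overlap first, writing $\bx'=\bx-{\bf z}$, $\by'=\by-{\bf z}$, observing that $h(\bx')$ and $h(\by')$ are disjoint, and then invoking the already-proved forward direction to compare $\Sigma h(\bx'+\by')$ with $\Sigma h(\bx')+\Sigma h(\by')+2\Sigma h({\bf z})$; the converse thus leans on the forward implication as a lemma. You instead compare the two sides index by index, noting that $(x_j+y_j)-\bigl((x_j+y_j)\bmod 2\bigr)$ is $2$ exactly when $z_j=1$ and $0$ otherwise, which yields both implications from a single termwise computation and needs no auxiliary decomposition. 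Your version also makes explicit a step the paper leaves tacit: that $\Sigma h({\bf z})=0$ forces ${\bf z}=0$ because it is a finite sum of nonnegative terms. One thing the paper's decomposition buys is a template that carries over almost verbatim to the balanced-ternary analogue (Proposition 2.2), where the same splitting gives $2{\bf z}=-{\bf z}$ and the conclusion weakens to $\Sigma h({\bf z})=0$ precisely because the nonnegativity argument is no longer available; your termwise computation would adapt there too (the per-index defect becomes $3z_j$), so nothing essential is lost either way.
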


 \begin{proof}   If ${\bf z}=0$, the Egyptian fractions  $h(\bx)$ and $h(\by)$ are disjoint;  their sum by is represented by   the union of the  sets associated to them, and  
 $ \Sigma h(\bx+\by)   =\Sigma h(\bx)+\Sigma h(\by)$.

 \medskip
 	Conversely, we assume that   $\Sigma h(\bx+\by) =\Sigma h(\bx)+\Sigma h(\by) $;  letting  $\bx'=\bx -{\bf z}$  and $\by'=\by- {\bf z}$ and recalling that addition in $\ell^2(\Z_2)$ follows binary rules, we  have that   $\bx+\by =	 \bx' + \by' +2{\bf z}= \bx' + \by'$. 
 	   The Egyptian fractions $h(\bx') $ and $ h(\by')  $ are disjoint,   and by the first part of the proof, 
 	    $$ 
 	   \Sigma h(\bx+\by)=	\Sigma h(\bx'+\by')= \Sigma h(\bx') + \Sigma h(\by')  .$$
 But since  we also have
 	   $   \Sigma  h(\bx+\by)=\Sigma h(\bx) + \Sigma h(\by)=   \Sigma h  (\bx') +  \Sigma h  (\by') + 2\Sigma h({\bf z});  
 	   $ 
 	  necessarily  $2\Sigma h({\bf z})=0$,    and so  ${\bf z=0}$. 
 	  \end{proof}
 	
 	\medskip
 	 The answer to {\bf Q2} is slightly more complicated.
 	
 	\begin{Prop}\label{P-Sum-3}  
 		a)   Let $\bx ,\, \by\in \ell^1(\Z_3)$.  If      ${\bf z  }=0$, then   $ \Sigma   h(\bx) + \Sigma   h( \by) = \Sigma   h(\bx +\by)$.
 		
 		b)  If $ \Sigma  h(\bx) + \Sigma   h( \by) = \Sigma   h(\bx +\by)$, 
 		then   $\Sigma h( {\bf z} )=0$.
 	\end{Prop}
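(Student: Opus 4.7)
The plan is to derive both parts from a single master identity obtained by a componentwise analysis of the balanced-ternary addition in $\ell^1(\Z_3)$. Since addition there is carried out componentwise modulo $3$ with representatives in $\{-1,0,1\}$, one has $(\bx+\by)_j = x_j + y_j$ for every $j$ except when $x_j = y_j = 1$ (ordinary integer sum $2$, while $(\bx+\by)_j = -1$) or when $x_j = y_j = -1$ (ordinary sum $-2$, while $(\bx+\by)_j = 1$). These two exceptional cases are precisely the indices at which $z_j \ne 0$, so a direct case check yields
$$x_j + y_j = (\bx+\by)_j + 3\, z_j \qquad \text{for every } j.$$

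Dividing by $j+1$ and summing over $j$ then gives the master identity
$$\Sigma h(\bx) + \Sigma h(\by) \;=\; \Sigma h(\bx+\by) + 3\,\Sigma h({\bf z}),$$
from which both assertions follow at once. For (a), the hypothesis ${\bf z}=0$ kills the correction term $3\,\Sigma h({\bf z})$, leaving the desired linearity. For (b), the hypothesis $\Sigma h(\bx+\by) = \Sigma h(\bx) + \Sigma h(\by)$ forces $3\,\Sigma h({\bf z}) = 0$ in $\Q$, and hence $\Sigma h({\bf z}) = 0$.

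I do not foresee any substantive obstacle beyond careful bookkeeping of the four or five cases needed to establish the componentwise identity. One small notational point is that $\Sigma h$ is being applied to a vector $\bx+\by$ whose entries, before reduction, may lie outside $\{-1,0,1\}$, but $\sum_j x_j/(j+1)$ makes unambiguous sense as a rational number for any finitely-supported integer vector, so the master identity is a genuine equality in $\Q$. This observation also clarifies why (b) is strictly weaker than the converse in Proposition~\ref{P-Sum-2}: in the $\Z_2$ setting the entries of ${\bf z}$ are non-negative, so $\Sigma h({\bf z})=0$ forces ${\bf z}=0$, whereas in the $\Z_3$ setting the entries of ${\bf z}$ can carry mixed signs, and one cannot in general upgrade $\Sigma h({\bf z})=0$ to ${\bf z}=0$.
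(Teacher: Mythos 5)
Your proof is correct, and it takes a genuinely different (and more economical) route than the paper's. The paper proves (a) by a case analysis on the index set where cancellation $x_j+y_j=0$ occurs, and proves (b) by decomposing $\bx=\bx'+{\bf z}$, $\by=\by'+{\bf z}$, invoking the balanced-ternary fact $2{\bf z}=-{\bf z}$, and applying part (a) three times to the auxiliary disjoint pairs before arriving at $2\Sigma h({\bf z})=-\Sigma h({\bf z})$. You instead verify the single integer-valued componentwise identity $x_j+y_j=(\bx+\by)_j+3z_j$ (which holds because the only indices where balanced-ternary addition disagrees with integer addition are exactly those with $z_j\ne 0$, and there the discrepancy is $\pm 3$), and then sum against the weights $\frac{1}{j+1}$ to get the master identity $\Sigma h(\bx)+\Sigma h(\by)=\Sigma h(\bx+\by)+3\Sigma h({\bf z})$, from which (a) and (b) are immediate. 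Both arguments ultimately reduce (b) to $3\Sigma h({\bf z})=0$ in $\Q$, so the arithmetic core is the same, but your packaging buys several things: it treats (a) and (b) as two faces of one identity rather than proving (a) first and bootstrapping; it avoids having to check that the auxiliary vectors $\bx'$, $\by'$, $\bx'+\by'$, ${\bf z}$ are pairwise disjoint in the right sense; and the same template with $2z_j$ in place of $3z_j$ gives a one-line proof of Proposition \ref{P-Sum-2} as well (there the entries of ${\bf z}$ are non-negative, so $\Sigma h({\bf z})=0$ upgrades to ${\bf z}=0$, which is exactly the asymmetry you point out at the end). Your closing remark that (b) cannot be strengthened to ${\bf z}=0$ is also correct; for instance $\bx=\by$ with $h(\bx)=\frac12-\frac13-\frac16$ gives ${\bf z}=\bx\ne 0$ yet $\Sigma h({\bf z})=0$ and the additivity in \eqref{e-rel-3} holds.
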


\begin{proof}    
	  If ${\bf z(x,y)}=0$, then  either $x_jy_j= 0$ or $x_j+y_j=0$ for every index $j$. If $x_jy_j=0$ for every $j$,  the Egyptian fractions  $h(\bx)$ and $h(\by)$ are disjoint  and 
    $ \Sigma h  (\bx+\by) =    \Sigma h  (\bx )+\Sigma h  (\by ). $  
     If   the set $I$   of indices $j$ for which $x_jy_j\ne 0$ and $x_j+y_j=0$   is nonempty, we   have that
    $$\sum_{j\in I} \frac {x_j}{j+1}+ \sum_{j\in I} \frac {y_j}{j+1} = \sum_{j \in I} \frac {x_j+y_j}{j+1}=0,$$
    and  so
    $$ \Sigma h  (\bx+\by) =  \sum_{j\in \N-I} \frac {x_j+y_j}{j+1}  = \sum_{j\in \N-I} \frac {x_j }{j+1}+\sum_{j\in \N-I} \frac { y_j}{j+1}=\Sigma h  (\bx )+\Sigma h  (\by )  $$
     as required.
   
Let us prove  b). 
We let  $\bx'=\bx -{\bf z}$  and $\by'=\by- {\bf z}$ and  we  observe that  the components of $\bx'$ and $\by'$ satisfy  either $x_j'y_j'=0$ or $x_j'+y_j'=0$.   Furthermore,    $\bx'+\by'$ and ${\bf z}$ are disjoint.

The operations  in $\ell^2(\Z_3)$  follows  balanced ternary rules,  and so  $2{\bf z}=- {\bf z}$.    Thus,
$ \bx +\by =  \bx'+\by' +2{\bf z}=\bx'+\by' -{\bf z} $ and  by the previous observations and  the first part of the proof  
\begin{equation}\label{1}
\Sigma h  (\bx +\by ) =\Sigma h  (\bx'+\by'+2{\bf z})=\Sigma h  (\bx'+\by'- {\bf z})= \Sigma h  (\bx'+\by')-\Sigma h  ({\bf z}).
\end{equation}
We also have
\begin{equation}\label{2}
\Sigma  h  (\bx)+\Sigma h  (\by)   = \Sigma  h  (\bx' + {\bf z})+\Sigma h  (\by'+  {\bf z}) $$$$=	\Sigma  h  (\bx') + \Sigma h  (\by')  + 2 \Sigma h  ({\bf  z})= \Sigma h  (\bx'+\by') + 2 \Sigma h  ({\bf  z}).
\end{equation}
By assumption,  $\Sigma  h  (\bx)+\Sigma h  (\by)=\Sigma  h  (\bx+ \by)$   and so from \eqref{1} and \eqref{2}  follows that 
  $-\Sigma h  ({\bf z})=2  \Sigma h  ({\bf z})$, and    so
$ \Sigma h  ({\bf z})=0$.
 
\end{proof}

\section{Egyptian fractions  and fractals}

The   Sierpinski triangle  is the set obtained  from a recursive process of  removing  smaller and smaller triangles from an initial   triangle.     It is named after the Polish mathematician Waclaw Sierpinski (1882 -1969)  but appeared as a decorative pattern many centuries before his work. 

In this paper,    the initial triangle $T_0$ is   the closed  triangle with vertices  $(0, 0)$, $(0,1)$ and $(1,0)$,
 \begin{equation} \label{e-To}
 T_0=\{(x, y)\in [0,1]\times [0,1]  \ : \ 0 \leq x+y\leq 1\}.
 \end{equation}

\begin{figure}[h]
	\includegraphics[width=0.4\textwidth]{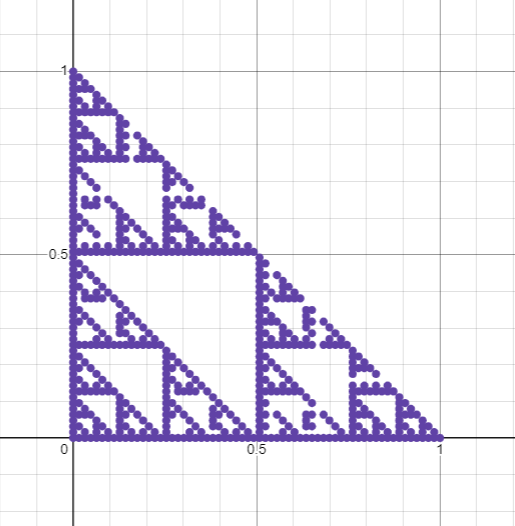}
	\caption{\small{ The Sierpinski triangle}}
	\label{fig:triangle}
\end{figure}

To formally define the Sierpinski triangle, we   define recursively a decreasing sequence of sets $\{S_i\}_{i = 0}^{\infty}$, where  $S_0=T_0$ and each $S_i$ is the union of  $3^{i}$  scaled down copies of  $T_0$.

To  construct $S_1$, we let  $T_1$  be  the triangle   $T_0$ scaled by a factor of $\frac 12$ and  we consider    the translates of $T_1$  by vectors $v =(\frac 12, 0)$ and $w=(0,\,\frac 12 )$.  We  let
  $S_1=  T_1\cup(T_1+v ) \cup (T_1+w)$.   See Figure $3$.
 
  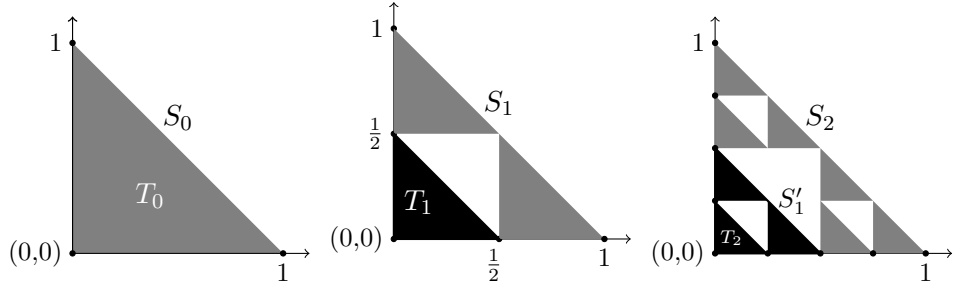
\begin{figure}
 	\hskip - .2 cm
 	\begin{tikzpicture}[scale=0.7]
 		\fill[gray] (0,0)--(4,0)--(0,4)--cycle;
 		\draw[black, ->] (0,0) -- (4.5,0);
 		\draw[black, ->] (0,0) -- (0, 4.5);
 		
 		
 		\draw[black, ->] (0,0) -- (0, 4.5);
 		
 		\draw[fill=black]  (0,0)  circle(0.05) node[black, left]{ \small{(0,0)}};
 		\draw[fill=black]  (0,4)  circle(0.05) node[black, left]{ \small{1}};
 		

 		\draw[fill=black]  (4,0)  circle(0.05) node[black, below]{ \small{1} };
 		\draw (1, 1.1) node[white, right] { {\bf $T_0$}};
 		\draw (2,  3) node[black, below] { {\bf $S_0$}};
 	\end{tikzpicture}
 	\begin{tikzpicture} [scale=0.7]
 		\draw[black, ->] (0,0) -- (4.5,0);
 		\draw[black, ->] (0,0) -- (0, 4.5);

 		\draw[fill=black]  (0,2)  circle(0.05) node[black, left]{ \small{$\frac 12$} };
 		
 		\draw[fill=black]  (0,0)  circle(0.05) node[black, left]{ \small{(0,0)}};
 		\draw[fill=black]  (0,4)  circle(0.05) node[black, left]{ \small{1}};
 		\draw[fill=black]  (2,0)  circle(0.05);
 		\draw (1.9, 0) node[black, below ]{ \small{$\frac 12$} }   ;
 		
 		\fill[gray] (2,0)--(4,0)--(2,2)--cycle; 
 		\fill[black] (0,0)--(2,0)--(0,2)--cycle;
 		
 		\fill[gray] (0,4) -- (2,2) -- (0,2) -- cycle; 
 		
 		\draw[fill=black]  (4,0)  circle(0.05) node[black, below]{ \small{1} };
 		\draw (0, .7) node[white, right] { {\bf $T_1$}};
 		\draw[black, thick] (0,0)--(2,0)--(0,2)--cycle;
 		\draw (2,  3) node[black, below] { {\bf $S_1$}};
 	\end{tikzpicture}
 	\begin{tikzpicture}[scale=0.7]
 		\draw[black, ->] (0,0) -- (4.5,0);
 		\draw[black, ->] (0,0) -- (0, 4.5);
 		\fill[gray] (0,4) -- (1,3) -- (0,3) -- cycle;

 		\fill[gray] (0,3) -- (1,2) -- (0,2) -- cycle;
 		
 		\fill[gray] (1,2 ) -- (2,2) -- (1,3) -- cycle;
 		
 		
 		\fill[gray] (2,2) -- (3,1) -- (2,1) -- cycle;
 		\fill[gray] (2,1) -- (3,0) -- (2,0) -- cycle;
 		\fill[gray] (3,0 ) -- (4,0) -- (3,1) -- cycle;
 		\fill[black] (0,0)--(2,0)--(0,2)--cycle;
 		\fill[white](1,0)--(1,1)--(0,1)--cycle;

 		\draw[fill=black]  (0,0)  circle(0.05) node[black, left]{ \small{(0,0)}};
 		\draw[fill=black]  (0,1)  circle(0.05) 
 		;
 		\draw[fill=black]  (0,2)  circle(0.05); 
 		\draw[fill=black]  (0,3)  circle(0.05);  
 		\draw[fill=black]  (1,0)  circle(0.05) 
 		;
 		\draw[fill=black]  (0,4)  circle(0.05) node[black,  left]{ {\small 1}};
 		\draw[fill=black]  (2,0)  circle(0.05); 
 		\draw[fill=black]  (3,0)  circle(0.05);  
 		\draw[fill=black]  (4,0)  circle(0.05)node[black,  below]{ {\small 1}};  
 		\draw[black, thick] (0,0)--(2,0)--(0,2)--cycle;
 		\draw (-.1 ,  .3) node[white, right] { \tiny{\bf $T_2$}};
 		\draw (2,  3) node[black, below] { {\bf $S_2$}};
 		\draw (1,  1) node[black, right] {\small {\bf $S_1'$}};
 	\end{tikzpicture}
 	\caption{ \small{   The first two steps of the construction of the Sierpinski triangle}}
 \end{figure}
To construct $S_{j+1}$ from $S_j$, we let  $S_j' $ denote the set  $S_j$ scaled by a factor $\frac 12$   and we define $S_{j+1}=  S_j'\cup(S_j'+v) \cup (S_j'+w)$.
  The Sierpinski Triangle  is  then defined as the set  $\dsize S_T := \bigcap_{j =0}^{\infty} S_j$.  
%

Every point $P$ in the Sierpinski triangle belongs to $S_i$ for all $  i\ge 0$; 
we can  denote by $ \tau_i(P)  $   the   triangle  in the set $S_i$ that contains $P$ and for every $P\in S_T$, we can construct  a  decreasing sequence of triangles $\tau(P)=\{\tau_i(P)  \})_{i \in\N}$  such that $P =\bigcap_{i=0}^{\infty} \tau_i(P)$.    The sequence $\tau(P)$ can be used to identify the position of $P$ in the Sierpinski triangle; for details,  we  refer the reader to   \cite{S} or to other books on fractals.  

\medskip

The next theorem establishes  a connection between the  Sierpinski triangle $S_T$ and the standard Egyptian fractions.   

\begin{Thm}\label{T-Main}
	Let $\bx, \by\in \ell^1(\Z_2)$ and let $h(\bx)$ and $h(\by)$ be the associated Egyptian fractions.   If
	\begin{equation}\label{e-sum-EF}
		\Sigma h(\bx+\by)= \Sigma h(\bx)+\Sigma h(\by)\end{equation}   the point   $(x,y) $ for which  $  x= [0.x_1 x_2....]_2 $ and $y=   [0.y_1y_2...]_2 $ lies in the  Sierpinski triangle $S_T  $.   
\end{Thm}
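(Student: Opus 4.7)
The plan is to first translate the hypothesis \eqref{e-sum-EF} via Proposition \ref{P-Sum-2} into a combinatorial condition on the binary digits of $x$ and $y$, and then place the point $(x,y)$ inside every approximating set $S_i$ of the Sierpinski triangle.

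\emph{Step 1: reducing the hypothesis.} By Proposition \ref{P-Sum-2}, the identity \eqref{e-sum-EF} is equivalent to ${\bf z}(\bx,\by)=0$. Since the entries of $\bx,\by\in\ell^1(\Z_2)$ lie in $\{0,1\}$, this says exactly $x_jy_j=0$ for every $j\in\N$; that is, at no position $j$ do both binary digits equal $1$.

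\emph{Step 2: an explicit description of $S_i$.} Unrolling the recursion $S_{j+1}=S_j'\cup(S_j'+v)\cup(S_j'+w)$ with $v=(1/2,0)$ and $w=(0,1/2)$, a straightforward induction on $i$ yields
\begin{equation*}
S_i=\bigcup_{(a_1,b_1),\dots,(a_i,b_i)}\Bigl(\,\sum_{k=1}^{i}\frac{(a_k,b_k)}{2^k}+2^{-i}T_0\Bigr),
\end{equation*}
the union being taken over all tuples with $(a_k,b_k)\in\{(0,0),(1,0),(0,1)\}$. Indeed, halving the level-$j$ representation shifts each $2^k$ in the denominator to $2^{k+1}$, and the three translates by $(0,0)$, $v$, $w$ exactly prepend a new admissible pair at position $k=1$.

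\emph{Step 3: placing $(x,y)$ inside $S_i$.} Given the finite binary expansions $x=[0.x_1x_2\dots]_2$ and $y=[0.y_1y_2\dots]_2$ with $x_jy_j=0$, choose $(a_k,b_k)=(x_k,y_k)$ for $k=1,\dots,i$; admissibility of each pair is precisely the condition $x_ky_k=0$ from Step 1. The residual is
\begin{equation*}
\Bigl(x-\sum_{k=1}^{i}\frac{x_k}{2^k},\ y-\sum_{k=1}^{i}\frac{y_k}{2^k}\Bigr)=\Bigl(\sum_{k>i}\frac{x_k}{2^k},\ \sum_{k>i}\frac{y_k}{2^k}\Bigr),
\end{equation*}
whose entries are non-negative and whose coordinates sum to at most $\sum_{k>i}2^{-k}=2^{-i}$; hence this residual lies in $2^{-i}T_0$. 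Therefore $(x,y)\in S_i$ for every $i\ge 0$, and consequently $(x,y)\in S_T=\bigcap_{i\ge 0} S_i$.

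The only real content is the description of $S_i$ in Step 2; once that is in place, the digit condition produced by Proposition \ref{P-Sum-2} is exactly the admissibility condition for the coordinate tuple $(x_1,y_1),\dots,(x_i,y_i)$, and membership in $S_T$ drops out.
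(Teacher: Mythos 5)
Your proof is correct, and it reaches the theorem by the same high-level route as the paper (reduce \eqref{e-sum-EF} to the digit condition $x_jy_j=0$ via Proposition \ref{P-Sum-2}, then induct on the recursive construction of the $S_i$), but the second half is executed differently. The paper goes through Proposition \ref{P-1}, whose part (b) is proved by an induction that simultaneously tracks which digit strings occur in $S_n$ and which finite-expansion points $S_n$ contains; as written, that argument only covers finite binary expansions, and the paper explicitly notes that the general case of part (b) is not proved there. You instead unroll the recursion into a closed-form parametrization of $S_i$ as a union of $3^i$ cells $\sum_{k\le i}(a_k,b_k)2^{-k}+2^{-i}T_0$ indexed by admissible digit tuples, and then place $(x,y)$ in the cell indexed by its own first $i$ digits via the tail estimate $\sum_{k>i}(x_k+y_k)2^{-k}\le 2^{-i}$ (which uses $x_k+y_k\le 1$, a consequence of $x_ky_k=0$, for the positions $k>i$ as well — worth saying explicitly). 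This buys you something the paper's lemma does not: your argument never uses finiteness of the expansions, so it actually establishes the stronger converse of Proposition \ref{P-1}(a) that the paper only asserts. For the theorem itself, where $\bx,\by\in\ell^1(\Z_2)$ forces finite expansions, both arguments suffice.
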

 Recall that  the  binary decimal $[0.s_1s_2...]_2$   represents   the number   $  \frac{s_1}{2}+\frac{s_2}{2^2}+...$.   
  If ${\bf s}=(s_1, s_2,\, ...)$  is in $ \ell^1(\Z)$, the decimal binary  number $[0.s_1s_2...]_2$   is finite. 
    
To prove Theorem \ref{T-Main} we  need the following

\begin{Prop}\label{P-1}
 Let  $ (x,y)\in  [0,1]\times [0,1]$.
 
a)   If $(x,y) $ lies in  the Sierpinski triangle $S_T$,     there exist  binary representations of $x$ and $y$ given by  $[0.x_1x_2...]_2$ and $[0.y_1y_2... ]_2  $  for which   $ x_j  y_j=0$  for every index $j$. 

b)  If  the  binary representations of $x$ and $y$  are finite  and satisfy $ x_j  y_j=0$ for every index $j$, then  the point $(x,y)$  lies in   $S_T$ 
 \end{Prop}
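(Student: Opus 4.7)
The proof splits into two independent arguments based on the self-similarity identity
$$S_T = \tfrac{1}{2} S_T \cup \bigl(\tfrac{1}{2} S_T + (\tfrac{1}{2},0)\bigr) \cup \bigl(\tfrac{1}{2} S_T + (0,\tfrac{1}{2})\bigr),$$
which follows by passing to the limit in the recursive construction $S_{j+1} = S_j' \cup (S_j' + v) \cup (S_j' + w)$. This decomposition is exactly what produces the binary digits: the three pieces correspond to the three admissible digit pairs $(0,0)$, $(1,0)$, $(0,1)$, all of which satisfy $x_1 y_1 = 0$.

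For part (a), given $(x,y) \in S_T$, the plan is to build the binary digits one at a time. I set $(u_0,v_0) = (x,y)$ and, at each step, use membership of $(u_j, v_j)$ in one of the three pieces to assign $(x_{j+1}, y_{j+1})$ as the corresponding element of $\{(0,0),(1,0),(0,1)\}$, then set $(u_{j+1}, v_{j+1}) = (2 u_j - x_{j+1},\, 2 v_j - y_{j+1})$. The key point is that $(u_{j+1}, v_{j+1})$ again lies in $S_T$ by self-similarity, so the iteration never stalls. Reading the digits off gives $x = \sum_{j\ge 1} x_j / 2^j$ and $y = \sum_{j\ge 1} y_j/2^j$, with $x_j y_j = 0$ by construction, so these are the desired binary representations.

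For part (b), I would induct on $N$, the largest index for which either $x_N \ne 0$ or $y_N \ne 0$. The base case $N=0$ gives $(0,0) \in S_T$ trivially. For the inductive step, the hypothesis $x_1 y_1 = 0$ forces $(x_1,y_1) \in \{(0,0),(1,0),(0,1)\}$; letting $x' = [0.x_2 x_3 \ldots x_N]_2$ and $y' = [0.y_2 \ldots y_N]_2$, the digit condition is preserved and $(x',y') \in S_T$ by induction. The relations $x = (x_1 + x')/2$ and $y = (y_1+ y')/2$ then place $(x,y)$ in one of the three pieces listed above, hence in $S_T$.

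The principal subtleties, rather than obstacles, are two. First, in part (a) the point may lie on a shared boundary between two of the three pieces (this is exactly what creates the two binary representations of $1/2$ and such numbers); but since the claim is only existence of a valid representation, we can choose either assignment. Second, part (b) genuinely requires finiteness: without it, one could try to combine $x = [0.111\ldots]_2 = 1$ with $y = 0$ and would need to argue $(1,0) \in S_T$ via the alternative representation $x = [0.111\ldots]_2$, since $x_j y_j = 0$ is automatically preserved; but to avoid this kind of limit point argument in the elementary induction, the hypothesis restricts to terminating expansions, which makes the induction on $N$ go through cleanly.
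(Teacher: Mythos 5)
Your proof is correct, but it is organized differently from the paper's. The paper never writes down the self-similarity identity for the limit set; instead it runs a single induction on the construction stages $S_n$, proving simultaneously that (i) every point of $S_n$ admits binary expansions with $x_jy_j=0$ for all $j\le n+1$, and (ii) $S_n$ contains every point whose expansion terminates by position $n+1$ and satisfies the digit condition, and then passes to the intersection $S_T=\bigcap_n S_n$. You instead work directly with the exact invariance $S_T=\tfrac12 S_T\cup(\tfrac12 S_T+v)\cup(\tfrac12 S_T+w)$, extracting digits by iterating the inverse maps for part (a) and inducting on the depth $N$ of the expansion for part (b). Your route buys something real in part (a): the digit-extraction algorithm produces one coherent infinite expansion, whereas the paper's stagewise induction only guarantees, for each $n$, \emph{some} expansion good up to position $n+1$, and leaves implicit why these cohere into a single expansion valid for all $j$. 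The price is that the inclusion $S_T\subseteq\tfrac12 S_T\cup(\tfrac12 S_T+v)\cup(\tfrac12 S_T+w)$, which you dismiss as "passing to the limit," is the only nontrivial direction and deserves a sentence: since each $S_j$ is the union of the three decreasing compact families $\tfrac12 S_{j-1}+u$, a point of $\bigcap_j S_j$ lies in infinitely many sets of at least one family, hence (by monotonicity) in its full intersection $\tfrac12 S_T+u$. With that remark added, your argument is complete; your closing observations about boundary points and about why finiteness is needed in (b) match the caveats the paper itself makes.
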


  A similar version of this proposition is known when the starting triangle is equilateral, but since we are working with a right triangle, it is helpful to provide a proof here.
  
Our proof allows us to establish part b) of Proposition \ref{P-1} only when $x$ and  $y$ have finite binary decimal expansions, which is enough to prove Theorem \ref{T-Main}. However, it is also possible to prove that part b) holds when the binary decimal expansions of $x$ and  $y$ are not finite, or in other words that the converse of part a) holds.
  
  Proposition \ref{P-1} leads to Theorem \ref{T-Main}. In fact, according to Proposition \ref{P-Sum-2}, the identity \eqref{e-sum-EF} is true if and only if  $x_jy_j = 0$ for all indices $j $. By Proposition \ref{P-1} (b), all points $ (x,y) $  in the square $ [0,1] \times [0,1] $ whose  finite binary expansions satisfy $x_jy_j = 0$ are located in the Sierpinski triangle  $ S_T $.
 
 \medskip
 To prove Proposition \ref{P-1} we need the following

\begin{Lemma}\label{L-1} 
	 For every $n\ge 0$,  we let $T_n$ be a scaled copy of the triangle $ T_0$ defined in \eqref{e-To} by a factor $\frac{1}{2^{n }}$.
	The    points  $(x,y)\in T_n$    have binary representations  $ x=  [0. x_1x_2...]_2 , \ y= [0. y_1y_2...]_2 $, with  $x_j=y_j=0$ for all  $ j\leq n$ and   $x_{n+1}y_{n+1}=0$. 
	\end{Lemma}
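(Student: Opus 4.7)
The plan is to proceed by induction on $n$, exploiting the self-similarity of the triangles $T_n$ under the scaling map $(x,y) \mapsto (2x, 2y)$, which corresponds in binary to shifting the binary point one position to the left. The shape of the statement already suggests this: the condition for $T_{n}$ is exactly the condition for $T_{n-1}$ pushed one binary digit further to the right.

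For the base case $n = 0$, we have $T_0 = \{(x,y) \in [0,1]^2 : x + y \leq 1\}$, and the conclusion reduces to producing binary representations with $x_1 y_1 = 0$ (the condition $x_j = y_j = 0$ for $j \leq 0$ being vacuous). If $x < 1/2$, the standard binary representation gives $x_1 = 0$, and symmetrically if $y < 1/2$. The only case left uncovered is $x \geq 1/2$ and $y \geq 1/2$, and then $x + y \leq 1$ forces $x = y = 1/2$; here we invoke the non-uniqueness of the binary expansion and write $y = [0.0111\ldots]_2$, so $y_1 = 0$ and $x_1 y_1 = 0$.

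For the inductive step, assume the lemma holds for $T_{n-1}$ and let $(x,y) \in T_n$. Since $T_n$ is $T_0$ scaled by $2^{-n}$, we have $x, y \in [0, 2^{-n}]$ and $x + y \leq 2^{-n}$, so $2x, 2y \in [0, 2^{-(n-1)}]$ and $2x + 2y \leq 2^{-(n-1)}$, i.e.\ $(2x, 2y) \in T_{n-1}$. By the inductive hypothesis, we may choose representations $2x = [0.a_1 a_2 \ldots]_2$ and $2y = [0.b_1 b_2 \ldots]_2$ with $a_j = b_j = 0$ for $j \leq n-1$ and $a_n b_n = 0$. Dividing by $2$ shifts the binary point one place, yielding $x = [0.0\, a_1 a_2 \ldots]_2$ and $y = [0.0\, b_1 b_2 \ldots]_2$. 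Hence $x_1 = y_1 = 0$, $x_{j+1} = a_j$ and $y_{j+1} = b_j$ for $j \geq 1$, so $x_j = y_j = 0$ for all $j \leq n$ and $x_{n+1} y_{n+1} = a_n b_n = 0$, as required.

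The main obstacle lies in the base case, specifically at the corner $(1/2, 1/2)$ of $T_0$: both standard representations there have leading digit $1$, and only the alternative expansion $1/2 = [0.0111\ldots]_2$ delivers $x_1 y_1 = 0$. Once this boundary subtlety is absorbed into the initial choice of expansion, the inductive step is essentially bookkeeping and propagates the good choice automatically to all $n$.
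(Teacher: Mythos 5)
Your proof is correct and follows essentially the same route as the paper's: establish the base case on $T_0$ by noting that $x+y\leq 1$ forces the only problematic point to be $(\tfrac12,\tfrac12)$, which is rescued by the alternative expansion $\tfrac12=[0.0111\ldots]_2$, and then propagate to $T_n$ by the binary-shift induction. Your write-up of the inductive step is somewhat more explicit than the paper's, but the underlying argument is identical.
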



\begin{proof}  
		We first observe that 
	the set of  points   $x=[0.x_1x_2...]_2$  for which  $x_1=1$ is bounded below by  $\frac 12=[0.1000...]_2$  and it is bounded above by   $ 1=\frac 12 + \frac{1}{2^2 }+\frac{1}{2^3}+...   $.  Thus,   $x_1=1$  when $x\in [\frac12,\, 1)$, and       $x_1=0 $ when  $ x\in [0,\,\frac 12 )$.  
Since  the points   in the  initial triangle $T_0$  satisfy  $0\leq x+y\leq 1$,  we can  easily verify  that  $P=(\frac 12,\,\frac 12)$ is
   the only point   in  $T_0$ for which $x_1=y_1=1$.  However,   we can also write $\frac 12=[0.01111...]_2$, and so  also the components of $P$ satisfy     $x_1y_1=0$.

Since dividing  by powers of $2$ shifts the decimal point in  the binary representation in base two,   we can verify by induction that  the first $n$ decimal digits of  points in the interval  $[0, \frac{1}{2^{n  }})$ are zero, and    the  triangle  $T_{n }$ with vertices   $(0,0), \ (\frac 1{2^{n }}, 0),\ (0, \frac{1}{2^{n  }})$  contains   points  $(x,y)$  whose binary representation  satisfies $x_j=y_j=0$ for all  $1\leq j\leq n$ and  $x_{n+1}y_{n+1}=0$.
\end{proof}

\begin{proof}[Proof of Proposition \ref{P-1}]
 We recall that   $S_T=\bigcap_{n=0}^\infty S_n$,  where $S_0$ is the initial triangle $T_0$  and each   $S_j$ is the union of   re-scaled and translated copies of $ S_{j-1}$.  
 
  We prove by induction that   the binary decimal expansions of points $(x,y)\in S_n$  satisfy $x_jy_j=0$ for every $j\leq n+1$. Furthermore, we also prove that  $S_n$ contains all points  $(x,y)$ for which $x_jy_j=0$ for every $j\leq n+1$ and $x_j=y_j=0$ whenever $j\ge n+2$.
 
  We have proved in Lemma \ref{L-1} that points   in  the initial triangle $T_0 $  have a decimal representations that satisfy $x_1y_1=0$.     The only points $(x,y)$  for which $x_j= y_j=0$ whenever $j\ge 2$  and $x_1y_1=0$ are  $ (0,0)$, $(\frac 12, 0)$ and $(0, \frac 12)$, and these points belong to $T_0$.

 Assume that  the decimal expansion of points $(x,y) $ in $  S_n$  satisfies   $x_jy_j=0$ for every $j\leq n+1$; assume also that $S_n$ contains  all points  $(x,y)$ for which $x_j=y_j=0$ whenever $j\ge n+2$  and $x_jy_j=0$ for every $j\leq n+1$.
 
 Let  $\t S_n$ represent  the scaled copy of $S_n$  by a factor  $\frac 12$.  This scaling shifts the binary decimal point of each coordinate in $S_n$, and so    for any $(x,y)$  in $\t S_n$  we have $x_1=y_1=0$  and   $x_jy_j=0 $ for $j=2,..,\, n+1$. 
  
By definition,  $S_{n+1}= \t S_n \cup+[(0, \frac 12)+ \t S_n ] \cup [(\frac 12, 0)+\t S_n]$.
 Adding  $ \frac 12$  only  affects the  first digits of the binary decimal expansion of the coordinates, and so  for every  points  $(x,y)\in S_{n+1}$  and every $j=1,... ,\, n+2$, we have 
$x_jy_j=0  $. Furthermore, our construction shows that $S_{n+1}$ contains all points  $(x,y)\in [0,1]\times[0,1]$ for which $x_j=y_j=0$ whenever $j\ge n+3$.

 The Sierpinski triangle $S_T$ is the intersection  of the  $S_n$, and so  all points  $(x,y)$ in $S_T$  have  binary decimal expansions that satisfy $x_jy_j=0$ for every index $j   $.    Our proof  also shows that  if the decimal binary representation of points  $(x,y)\in [0,1]\times[0,1]$  are finite and satisfy  $x_jy_j =0$  for every index $j$, then $(x,y)$ lie  in $S_T$.
   \end{proof}

\subsection{The  hexagon  snowflake fractal  }

 In this section we construct  the fractal in figure 4 
 and  we show how  it can be used to characterize the sum of signed Egyptian fractions.   
 
\begin{figure}[h] 
		\includegraphics[width=0.4\textwidth]{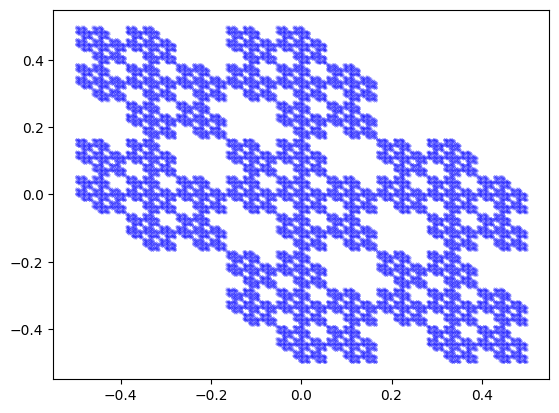}
		\caption{\small{  The  hexagon snowflake fractal $S_F$}}
		\label{fig:snowflake}
	\end{figure}

 The {\it  hexagon snowflake fractal} is formally defined   as the intersection of a decreasing sequence of sets $\{G_i\}_{i = 0}^{\infty}$ defined recursively as follows.

 $G_0$    is  the  gray polygon  $H_0$  in   figure 5.
%
  $G_1$  consists  7   copies of     $H_0$  scaled by  a factor $\frac 13$  and translated by vectors $v_0=(0,0)$, $v_1=( \frac 1{3},\, 0)$, $v_2=( 0,\, \frac 1{3} )$, 
 $v_3=( \frac 13,\, -\frac 1{3} )$, $v_4=-v_1$, $v_5=-v_2$ and $v_6=-v_3$, and
each subsequent $G_j$ is  a union of $7 $   scaled-down copies of $G_{j-1}$   translated by vectors $v_0,\, ...,\, v_6$.
We let $ S_F  := \cap_{n=0}^\infty G_n$.

	
%
	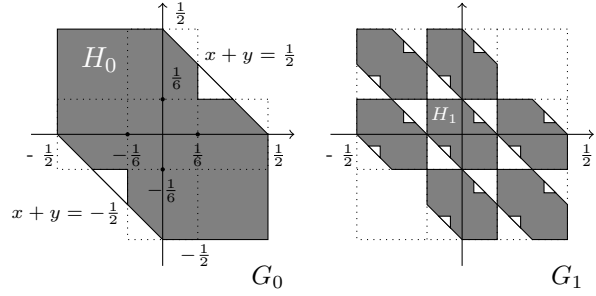
\begin{figure}
		\begin{tikzpicture}[scale=0.7]
			\draw [ fill=gray]  (2,0)--(0,2) --  (-2,2) -- (-2,  0) --(0, -2)--(2,-2)-- cycle;
			\draw[black, ->] (- 2.5,0) -- (2.5,0);
			\draw[black, ->]  (0,-2.5 ) -- (0, 2.5);
			\draw [ fill=white]  (4/3, 2/3)--(2/3,2/3) --  (2/3, 4/3 )  -- cycle;
			\draw [ fill=white]  (-4/3, -2/3)--(-2/3,-2/3) --  (-2/3, -4/3 )  -- cycle;
			
			\draw ( 2,  -2.3) node[black, below] { \small{\bf $G_0$}};
			\draw ( -1.2 ,  1) node[white, above] { {\bf $H_0$}};
			\draw (2.2,  0) node[black, below] {\tiny{\bf $ \frac 12$}};
			\draw (0,2.3) node[black, right] {\tiny{\bf $ \frac 12$}};
			\draw (-2.3,  0) node[black, below] {\tiny{- $\frac 12$}};
			\draw (0,-2.3 ) node[black, right] {\tiny{ $-\frac 12$}};
			
			\draw[black, dotted] (2/3, 2)--(2/3,-2)--(-2/3,-2)--(-2/3, 2)--cycle;
			\draw[black, dotted] (2, 2/3 )--(-2, 2/3 )--(-2, -2/3)--(2, -2/3)--cycle;
			 
			\draw[fill=black]  (-2/3,0)  circle(0.03) node[black, below]{ \tiny{$-\frac 16$} }  ;
			\draw[fill=black]  ( 2/3,0)  circle(0.03) node[black, below]{ \tiny{$ \frac 16$} }  ;
			\draw[fill=black]  (0,2/3 )  circle(0.03); 
			\draw ( .2 ,  2/3) node[black, above] {\tiny {  $\frac 16$}};
			\draw[fill=black]  (0,-2/3 )  circle(0.03  ) node[black, below]{ \tiny{$-\frac 16$} }  ;
			\draw (.6 , 3/2) node [black, right] 	{ \tiny{$x+y=\frac 12$} };
			\draw (-.6 , -3/2) node [black, left] 	{ \tiny{$x+y=-\frac 12$} };
		\end{tikzpicture}  
	\begin{tikzpicture}[scale=0.7]
		\draw [black, thin, dotted]
		  (2,2) -- (-2,2) -- (-2,-2 ) -- (2,  -2) -- cycle;

		\draw [ fill=gray]  (2/3,0)--(0, 2/3) --  (-2 /3,2/3) -- (-2/3,  0) --(0, -2/3)--(2/3,-2/3)-- cycle;
		\draw [ fill=white]  (4/9, 2/9)--(2/9,2/9) --  (2/9, 4/9 )  -- cycle;
		\draw [ fill=white]  (-4/9, -2/9)--(-2/9,-2/9) --  (-2/9, -4/9 )  -- cycle;

\draw [ fill=gray]  (6/3,0)--(4/3, 2/3) --  ( 2 /3,2/3) -- ( 2/3,  0) --(4/3, -2/3)--(6/3,-2/3)-- cycle;
\draw [ fill=white]  (16/9, 2/9)--(14/9,2/9) --  (14/9, 4/9 )  -- cycle;
\draw [ fill=white]  (8/9, -2/9)--(10/9,-2/9) --  (10/9, -4/9 )  -- cycle;

\draw [ fill=gray]  (-2/3,0)--(-4/3, 2/3) --  (-6 /3,2/3) -- (-6/3,  0) --(-4/3 , -2/3)--(-2/3,-2/3)-- cycle;
\draw [ fill=white]  (-8/9, 2/9)--(-10/9,2/9) --  (-10/9, 4/9 )  -- cycle;
\draw [ fill=white]  (-16/9, -2/9)--(-14/9,-2/9) --  (-14/9, -4/9 )  -- cycle;

\draw [ fill=gray]  (-2/3,4/3)--(-4/3, 6/3) --  (-6 /3,6/3) -- (-6/3,  4/3) --(-4/3 ,  2/3)--(-2/3, 2/3)-- cycle;
\draw [ fill=white]  (-8/9, 14/9)--(-10/9,14/9) --  (-10/9, 16/9 )  -- cycle;
\draw [ fill=white]  (-16/9, 10/9)--(-14/9,10/9) --  (-14/9, 8/9 )  -- cycle;

\draw [ fill=gray]  (2/3,4/3)--(0, 6/3) --  (-2 /3,6/3) -- (-2/3,  4/3) --(0,  2/3)--(2/3, 2/3)-- cycle;
 \draw [ fill=white]  (4/9, 14/9)--(2/9,14/9) --  (2/9, 16/9 )  -- cycle;
\draw [ fill=white]  (-4/9, 10/9)--(-2/9,10/9) --  (-2/9, 8/9 )  -- cycle;
		 
		 \draw [ fill=gray]  (2/3,-4/3)--(0, -2/3) --  (-2 /3,-2/3) -- (-2/3,  -4/3) --(0, -6/3)--(2/3,-6/3)-- cycle;
		   \draw [ fill=white]  (4/9, -10/9)--(2/9,-10/9) --  (2/9, -8/9 )  -- cycle;
		 \draw [ fill=white]  (-4/9, -14/9)--(-2/9,-14/9) --  (-2/9, -16/9 )  -- cycle;
		 
		 \draw [ fill=gray]  (6/3,-4/3)--(4/3, -2/3) --  ( 2 /3,-2/3) -- ( 2/3,  -4/3) --(4/3, -6/3)--(6/3,-6/3)-- cycle;
		  \draw [ fill=white]  (16/9, -10/9)--(14/9,-10/9) --  (14/9, -8/9 )  -- cycle;
		 \draw [ fill=white]  (8/9, -14/9)--(10/9,-14/9) --  (10/9, -16/9 )  -- cycle;
		 
		\draw[black, ->] (- 2.5,0) -- (2.5,0);
		\draw[black, ->]  (0,-2.5 ) -- (0, 2.5);
				\draw (-2.3,  0) node[black, below] {\tiny{- $\frac 12$}};
				\draw ( 2.3,  0) node[black, below] {\tiny{ \bf $\frac 12$}};
				\draw (-.4,  .7 ) node[white, below] {\tiny{ {\bf $H_1$}}};
				\draw[black, dotted] (2/3, 2)--(2/3,-2);
					\draw[black, dotted] (-2/3, 2)--(-2/3,-2);
						\draw[black, dotted] (2, 2/3 )--(-2, 2/3 );
						 \draw[black, dotted] (2,-2/3 )--(-2,-2/3 );
						\draw ( 2,  -2.3) node[black, below] { \small{\bf $G_1$}};
	\end{tikzpicture} \caption{The first two steps of the construction of the hexagon snowflake fractal $S_F$.}

\end{figure}

\medskip
The following is  an    analog of Theorem \ref{T-Main} for signed Egyptian fractions. 
 \begin{Thm}\label{T-Main-2}
 	Let $\bx, \by\in \ell^1(\Z_3)$ and let $ h  (\bx)$ and $ h  (\by)$ be the associated signed Egyptian fractions.  
 	If \begin{equation}\label{e-rel-3}\Sigma  h  (\bx+\by)= \Sigma  h  (\bx)+\Sigma  h  (\by)  \end{equation}
 	and ${\bf z(x,y)}=0$, 
  then  the  point $ (x,y)$ with  $x=[   0.x_1 x_2...]_3, \   y=[ 0.y_1y_2...]_3$ lies in the  fractal snowflake $S_F$. 

 \end{Thm}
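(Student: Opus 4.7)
The plan is to argue by close analogy with the proof of Theorem \ref{T-Main}, replacing binary expansions with balanced ternary expansions and the Sierpinski construction with the hexagon snowflake construction. First observe that by Proposition \ref{P-Sum-3}~a) the hypothesis ${\bf z(x,y)}=0$ already implies \eqref{e-rel-3}, so all that needs to be shown is: whenever each pair $(x_j,y_j)$ avoids $(1,1)$ and $(-1,-1)$, the point $(x,y)$ with $x=[0.x_1x_2\dots]_3$ and $y=[0.y_1y_2\dots]_3$ lies in $S_F$. The avoidance condition is equivalent to saying that every $(x_j,y_j)$ belongs to the $7$-element set
\[
\mathcal{A}=\{(0,0),\,(\pm1,0),\,(0,\pm1),\,(1,-1),\,(-1,1)\},
\]
and the elements of $\mathcal{A}$, once divided by $3$, are exactly the seven translation vectors $v_0,\dots,v_6$ used to pass from $H_0$ to $G_1$. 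This is the key arithmetic--geometric dictionary that drives the whole argument.

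The first technical step is an analog of Lemma \ref{L-1}: if $H_n$ denotes the scaled copy of $H_0$ by a factor $1/3^n$, the points of $H_n$ are exactly those whose balanced ternary expansions satisfy $x_j=y_j=0$ for $j\leq n$ and $(x_{n+1},y_{n+1})\in \mathcal{A}$. The essential case $n=0$ rests on the elementary fact that $x_1$ equals $0$, $1$, or $-1$ according as $x$ lies in $[-\tfrac16,\tfrac16]$, $[\tfrac16,\tfrac12]$, or $[-\tfrac12,-\tfrac16]$, together with the observation that $H_0$ is obtained from the hexagonal region $\{|x|,|y|,|x+y|\leq\tfrac12\}$ precisely by deleting the two small closed triangles whose interior points have leading digit pair $(1,1)$ or $(-1,-1)$. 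As in the binary case, points on shared boundaries admit multiple balanced ternary representations, and we simply pick the representation that realizes a digit pair in $\mathcal{A}$.

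Next, a straightforward induction on $n$ would show that $G_n$ contains every point of $[-\tfrac12,\tfrac12]^2$ whose balanced ternary expansion satisfies $(x_j,y_j)\in\mathcal{A}$ for $j\leq n+1$ and $x_j=y_j=0$ for $j\geq n+2$. The base case is the $n=0$ instance of the lemma above. For the inductive step, use the recursion $G_{n+1}=\bigcup_{i=0}^{6}\bigl(v_i+\tfrac13 G_n\bigr)$: scaling $G_n$ by $1/3$ shifts each coordinate expansion one position to the right and installs a zero in the leading slot, after which the translation by $v_i=(a_i,b_i)/3$ with $(a_i,b_i)\in\mathcal{A}$ deposits $(a_i,b_i)$ as the new leading digit pair without carries, precisely because the leading slot was zero.

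To conclude, since $\bx,\by\in\ell^1(\Z_3)$ have finite support, one can pick $N$ so large that $x_j=y_j=0$ for all $j\ge N+2$; the inductive statement then places $(x,y)$ in $G_N$, and because the sequence $\{G_m\}$ is decreasing it follows that $(x,y)\in G_m$ for every $m$, hence $(x,y)\in S_F=\bigcap_m G_m$. The main obstacle I expect is the careful bookkeeping of balanced ternary expansions along the internal boundaries separating the seven subregions of $H_0$, where the expansion is not unique; this is slightly more delicate than the binary analog in Lemma \ref{L-1} but follows the same selection strategy.
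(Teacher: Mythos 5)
Your proposal is correct and follows essentially the same route as the paper: the hypothesis ${\bf z(x,y)}=0$ is translated into the digit condition via Proposition \ref{P-Sum-3}, and the geometric containment is exactly the content of Proposition \ref{P-2}(b), whose proof the paper omits but which your induction on $G_n$ (with the $n=0$ case of Lemma \ref{L-2} as base case and the seven-translate recursion as inductive step) supplies in the intended way. One small caveat: the ``exactly'' in your auxiliary lemma is too strong --- for instance $(\tfrac12,\tfrac16)$ admits the leading digit pair $(1,0)$ yet lies outside $H_0$ because $x+y>\tfrac12$ --- but your main induction only invokes that converse direction for finitely supported expansions, where the needed containment does hold.
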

 Recall that    ${\bf z} ={\bf z(x, y)}$ is the vector whose components $z_j$ are equal to  $1$  if $x_j=y_j=1$,  to $-1$ if  $x_j=y_j=-1$  and to  zero in all other cases.

 The proof of Theorem \ref{T-Main-2} is based on  Proposition \ref{P-2} below.

 \begin{Prop}\label{P-2} Let  $ (x,y)\in  [-\frac 12, \frac 12]\times [\frac 12, \frac 12]$.
 	
 	
 	a)  If $(x,y) $ lies in the  hexagon snowflake  fractal  $S_F$,  there exist    ternary decimal  representations of $x$ and $y$     given by  $[0.x_1x_2...]_3$ and $[0.y_1y_2... ]_3  $  for which   either $ x_j  y_j=0$ or $x_j+y_j=0$ for every index $j$. 
 	
 	b)  If  the  ternary decimal representations of $x$ and $y$  are finite  and satisfy either $ x_j  y_j=0$ or $x_j+y_j=0$ for every index $j$, then  $(x,y)$  lies in   $S_F$ 
  
 \end{Prop}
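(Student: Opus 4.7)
The plan is to adapt the induction used in the proof of Proposition~\ref{P-1}, replacing binary by balanced ternary and the triangle $T_0$ by the hexagon $H_0$. The key geometric fact is that the seven translation vectors $v_0,\dots,v_6$ used to build $G_1$ correspond exactly, after multiplication by $3$, to the seven first-digit pairs $(x_1,y_1)\in\{-1,0,1\}^2$ satisfying the condition $x_1y_1=0$ or $x_1+y_1=0$, namely $(0,0)$, $(\pm 1,0)$, $(0,\pm 1)$, $(1,-1)$, $(-1,1)$. The two forbidden pairs $(1,1)$ and $(-1,-1)$ correspond to the two small triangles excised from the big hexagon $\{(x,y):|x|,|y|,|x+y|\le\frac12\}$ to produce $H_0$.

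First I would establish the base case, an analog of Lemma~\ref{L-1}: for $(x,y)\in H_0$ one can choose balanced ternary expansions $x=[0.x_1x_2\dots]_3$ and $y=[0.y_1y_2\dots]_3$ so that $(x_1,y_1)$ is one of the seven admissible pairs. The first digit of $x$ is determined by which of the intervals $[-\frac12,-\frac16]$, $[-\frac16,\frac16]$, $[\frac16,\frac12]$ contains $x$, with the endpoints $\pm\frac16$ and $\pm\frac12$ handled by the two ternary representations available there (for example $\frac16=[0.0111\ldots]_3=[0.1\overline{1}\,\overline{1}\ldots]_3$), and likewise for $y$. Then I would verify by direct inspection that the forbidden regions $\{x_1=y_1=1\}\cap H_0$ and $\{x_1=y_1=-1\}\cap H_0$ coincide with the two excised triangles: for instance the former is the closed triangle with vertices $(\frac16,\frac16)$, $(\frac13,\frac16)$, $(\frac16,\frac13)$. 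A rescaling argument then yields, for each $n$, that points in the $3^{-n}$-scaled copy of $H_0$ have expansions with $x_j=y_j=0$ for $j\le n$ and $(x_{n+1},y_{n+1})$ admissible.

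Next I would run the induction on the stage $n$ in parallel with the Sierpinski proof. Assume every $(x,y)\in G_n$ admits ternary expansions satisfying the condition for all $j\le n+1$, and conversely that every point with finite ternary expansions supported on $j\le n+1$ satisfying the condition lies in $G_n$. Scaling $G_n$ by $\frac13$ shifts the ternary point one place to the right, so the image $\frac13 G_n$ consists of points with $x_1=y_1=0$ whose digits $j=2,\dots,n+2$ satisfy the condition. In the recursion $G_{n+1}=\bigcup_{k=0}^{6}(v_k+\frac13 G_n)$ the translation by $v_k$ inserts one of the seven admissible pairs in position $1$ without any carry, so every $(x,y)\in G_{n+1}$ has ternary expansions satisfying the condition for all $j\le n+2$. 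Taking $S_F=\bigcap_n G_n$ then yields part (a); for part (b), a point with a finite ternary expansion satisfying the condition up to length $N$ belongs to $G_n$ for every $n\ge N$, hence to $S_F$.

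The main obstacle, as in the Sierpinski case, is the non-uniqueness of ternary expansions at the boundaries of the sub-hexagons: each boundary point admits several balanced ternary representations and one must consistently choose the one meeting the digit condition. The extra wrinkle compared with Proposition~\ref{P-1} is that each coordinate now has three possible first-digit values instead of two, and the constraint $|x+y|\le\frac12$ couples the two coordinates, so the base-case analysis must simultaneously account for all three options in each variable and rule out precisely the two forbidden pairs. Once the correct representatives are fixed, the inductive step is essentially formal and mirrors the Sierpinski argument line for line.
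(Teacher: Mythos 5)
Your proposal is correct and follows essentially the same route as the paper, which proves the base case as Lemma \ref{L-2} and then omits the induction on the stages $G_n$ as being parallel to the proof of Proposition \ref{P-1}. Your explicit identification of the seven translation vectors $3v_0,\dots,3v_6$ with the seven admissible first-digit pairs, and of the two excised triangles with the forbidden pairs $(1,1)$ and $(-1,-1)$, supplies exactly the detail the paper leaves implicit.
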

  

  According to Proposition \ref{P-Sum-3} (b), the identity \eqref{e-rel-3} holds whenever $ {\bf z} = 0 $; as noted in Section 2, $ {\bf z} = 0 $ if and only if  the components of $\bx$ and $\by$ satisfy either $ x_j y_j = 0 $ or $ x_j + y_j = 0 $ for every index $j$.  By Proposition \ref{P-2} (b), all points $ (x, y) $ whose decimal ternary expansions are finite and satisfy either $ x_j y_j = 0 $ or $ x_j + y_j = 0 $ for every index $j$ are contained in the hexagon  snowflake fractal $ S_F$. Therefore, Theorem \ref{T-Main-2} is proved.
  
  \medskip
  To prove Proposition \ref{P-2} we need the following 
 
 \begin{Lemma}\label{L-2} 
 	For every $n\ge 0$,  we let $H_n$ be a scaled copy of the polygon $ H_0$  by a factor $\frac{1}{3^{n }}$.
 	The    points  $(x,y)\in H_n$    have balanced ternary   representation  $ x=  [0. x_1x_2...]_3 , \ y= [0. y_1y_2...]_3 $  for which   $x_j=y_j=0$ for all  $  j\leq n$,  and  either $x_{n+1}y_{n+1}=0$ or  $x_{n+1}+y_{n+1}=0$.  
 \end{Lemma}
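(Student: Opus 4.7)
My plan is to proceed by induction on $n$, with the geometric work concentrated in the base case $n = 0$ and the inductive step reduced to a shift of balanced ternary digits. For the base case, the first balanced ternary digit localizes a number in $[-1/2, 1/2]$: $x_1 = 1$ forces $x \in [1/6, 1/2]$, $x_1 = 0$ forces $x \in [-1/6, 1/6]$, and $x_1 = -1$ forces $x \in [-1/2, -1/6]$, with $x = \pm 1/6$ admitting both of the adjacent leading digits via the balanced-ternary analogue of the identity $1/2 = [0.0111\ldots]_2$ used in the proof of Lemma \ref{L-1}. The square $[-1/2, 1/2]^2$ thus partitions into nine sub-rectangles indexed by $(x_1, y_1) \in \{-1, 0, 1\}^2$, and the desired condition $x_1 y_1 = 0$ or $x_1 + y_1 = 0$ is automatic in seven of them---the five containing a zero coordinate and the two anti-diagonal rectangles $(1, -1)$ and $(-1, 1)$.

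The two remaining ``bad'' sub-rectangles are $[1/6, 1/2]^2$ and $[-1/2, -1/6]^2$, and the key geometric observation is that intersecting them with the hexagonal constraint $-1/2 \le x + y \le 1/2$ produces exactly the two small triangles that are excised from the hexagon to form $H_0$. Concretely, $[1/6, 1/2]^2 \cap \{x + y \le 1/2\}$ is the upper removed triangle with vertices $(1/6, 1/6)$, $(1/3, 1/6)$, $(1/6, 1/3)$, and symmetrically for the lower one. Hence no point of $H_0$ lies in a bad sub-rectangle except possibly on its boundary at $x = \pm 1/6$ or $y = \pm 1/6$, where the representational ambiguity lets us choose a $0$ in the offending coordinate. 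For the inductive step, since $H_n = H_{n-1}/3$, any $(x, y) \in H_n$ equals $(x'/3, y'/3)$ with $(x', y') \in H_{n-1}$, and division by $3$ shifts balanced ternary expansions by one place: $[0.x_1' x_2' \ldots]_3/3 = [0.0\, x_1' x_2' \ldots]_3$. Applying the inductive hypothesis to $(x', y')$ and shifting yields representations with $x_j = y_j = 0$ for $j \le n$ and the required dichotomy at position $n + 1$.

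The main obstacle is the geometric verification in the base case: identifying the two bad sub-rectangles exactly with the pair of excised triangles once the hexagonal constraint is imposed. A secondary subtlety, already present in the proof of Lemma \ref{L-1}, is the treatment of boundary points along the ambiguity lines $x = \pm 1/6$ and $y = \pm 1/6$, handled by selecting the non-canonical representation that places a $0$ in the leading position.
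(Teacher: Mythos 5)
Your proof is correct and follows essentially the same route as the paper's: localize the first balanced-ternary digit via the intervals $[-\frac12,-\frac16]$, $[-\frac16,\frac16]$, $[\frac16,\frac12]$ (handling the ambiguity at $\pm\frac16$ by choosing the representation with leading digit $0$), and then induct by the digit shift under division by $3$. If anything, your base case is more complete than the paper's, since you explicitly verify that the two ``bad'' squares $[\frac16,\frac12]^2$ and $[-\frac12,-\frac16]^2$ meet the hexagonal region exactly in the two excised triangles of $H_0$, a point the paper's proof leaves implicit when it asserts that $H_0$ consists precisely of the points satisfying the digit condition.
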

 
 \begin{proof}   
 	We begin by noting  that   the set  of points  $x= [0. x_1x_2... ]_3$     for which $x_1=0$ is bounded above by $  \frac 1{3^2}+ \frac {1}{3^3}+ ... = \frac 16
 	$ and it is bounded below by $  -\frac 1{3^2}-\frac {1}{3^3}- ... = -\frac 16
 	$. Thus, the  points in the interval $[-\frac 16, \frac 16]$  are the only points in   $[-\frac 12, \,\frac 12]$  that have a  balanced ternary representation    that starts with $x_1=0$.  
 	 The points in the intervals  $(\frac 16, \frac 12]$     and $[-\frac 12, -\frac 16)$ can be written as  $[0.1 x_2 x_3...]_3$ and $[0.\ov 1 x_2 x_3...]_3$, resp. Note that $\frac 16$  and $-\frac  16$ can also be represented as $[0.1\ov 1\,\ov 1\,...  ]_3$ and $[0.\ov 1 \,1  1...  ]_3$, resp.

 	For the points $(x,y)$ in the squares $[\frac 16, \frac 12] \times[-\frac 12,\,-\frac 16 ] $  (resp. $[ - \frac 12,\,-\frac 16 ] \times[\frac 16,\,  \frac 12 ]$) we have     $x_1=1$ and $y_1=-1$ (resp. $x_1=-1$ and $y_1=1$) and so $x_1+y_1=0$.  
 	   For  the points  in the rectangle  $(-\frac 16, \frac 16)\times[-\frac 12, \frac 12]$, we have $x_1=0$, so $x_1y_1=0$.  Similarly, $y_1=0$ for the points in the rectangle $[-\frac 12, \frac 12]\times (-\frac 16, \frac 16) $
 	 	
  Based on these observations, we can conclude that 	the polygon $ H_0$   consists of points $(x,y)\in [-\frac 12, \frac 12]\times[-\frac 12, \frac 12] $ for which  either $  x_1 y_1=0$   or $x_1+y_1=0$.

 	Since dividing  by powers of $3$ shifts the decimal point in  the   balanced ternary representation, 
 	 we can verify by induction that  points in  the polygon   $H_{n }$ satisfy   Lemma \ref{L-2}. 
 \end{proof}
 
 The proof of  Proposition \ref{P-2}  is similar to that of Proposition \ref{P-1} and   it is omitted.


\begin{thebibliography}{9999}	
		  
		 
		
		\bibitem{Cr}   Croot III, E.S.,  {\it  On a coloring conjecture
		about unit fractions}, Annals of Mathematics, 157 (2003), 545–556.
	\bibitem{E} Eppstein, D., {\it Egyptian Fractions with Denominators
	from Sequences Closed Under Doubling}, Journal of Integer Sequences, Vol. 24, Article 21.8.8 (2021)
	
 \bibitem{FT} E. Faghih  , M. Taheri  , K.Navi  , N. Bagherzadeh  
 {\it Efficient realization of quantum balanced ternary reversible multiplier building blocks: A great step towards sustainable computing}, Sustainable Computing: Informatics and Systems 40 (2023)
 
 
 
	 
	
	\bibitem{GKP} Graham, R. L., Knuth, D. E.,   Patashnik, O.  {\it Concrete Mathematics: A Foundation for Computer Science} (1994).
	 
	
	\bibitem{L} Lebowitz-Lockard, N.
	{\it  Egyptian fractions of bounded length} , Res. Number Theory 10, no.1, Paper No. 16, (2024)
	
	\bibitem{LK}  Leskovec, J.;   Faloutsos, C. {\it Scalable modeling of real graphs using Kronecker multiplication}
	ICML '07: Proceedings of the 24th international conference on Machine learning, 497 -- 504,  (2007). https://doi.org/10.1145/1273496.1273559.
	 
	 
  \bibitem{S}  Schroeder, M. 
  {\it Fractals, Chaos, Power Laws: Minutes from an Infinite Paradise}. Dover Publications (2009)
 
  
	
	
	
	  \bibitem{TT2}
	  M.Toulabinejad, M. Taheri  , K.Navi  , N. Bagherzadeh  {\it Toward efficient implementation of basic balanced ternary arithmetic operations in CNFET technology}, Microelectronics Journal
	  Volume 90 (2019) 267--277.
	  \bibitem{V} Vose, M.   {\it Egyptian fractions}. Bulletin of the London Mathematical Society, 17(1) (1985) 21--24. 
 
\end{thebibliography}
 \end{document}